\numberwithin{equation}{section}
\theoremstyle{plain}
\newtheorem{theorem}{Theorem}
\newtheorem{corollary}[theorem]{Corollary}
\newtheorem{proposition}[theorem]{Proposition}
\newtheorem*{theorem*}{Theorem}
\newtheorem*{conjecture*}{Conjecture}
\theoremstyle{definition}
\newtheorem{remark}[theorem]{Remark}
\newtheorem*{definition}{Definition}
\newcommand{\CC}{{\mathbb{C}}}
\newcommand{\QQ}{{\mathbb{Q}}}
\newcommand{\ZZ}{{\mathbb{Z}}}
\def\C{{\mathcal C}}
\def\H{{\mathcal H}}
\begin{document}
\title[Orbifold E-functions]{Orbifold E-functions of dual invertible polynomials}
\author{Wolfgang Ebeling, Sabir M.~Gusein-Zade and Atsushi Takahashi}
\thanks{Partially supported 
by the DFG-programme SPP1388 ``Representation Theory'' and by a DFG-Mercator fellowship. The second named author is also supported by RFBR-13-01-00755
and NSh-5138.2014.1.
The third named author is also supported 
by Grant-in Aid for Scientific Research 
grant numbers 20360043 from the Ministry of Education, 
Culture, Sports, Science and Technology, Japan. 
}
\address{Institut f\"ur Algebraische Geometrie, Leibniz Universit\"at Hannover, Postfach 6009, D-30060 Hannover, Germany}
\email{ebeling@math.uni-hannover.de}
\address{Moscow State University, Faculty of Mechanics and Mathematics,\\
Moscow, GSP-1, 119991, Russia}
\email{sabir@mccme.ru}
\address{Department of Mathematics, Graduate School of Science, Osaka University, 
Toyonaka Osaka, 560-0043, Japan}
\email{takahashi@math.sci.osaka-u.ac.jp}
\subjclass[2010]{32S25, 32S35, 14J33, 14L30}
\begin{abstract} 
An invertible polynomial is a quasihomogeneous polynomial with the number of monomials coinciding with the
number of variables and such that the weights of the variables and the quasi-degree are well defined. 
In the framework of the search for mirror symmetric orbifold Landau--Ginzburg models,
P.~Berg\-lund and M.~Henningson considered a pair $(f,G)$ consisting of
an invertible polynomial $f$ and an abelian group $G$ of its symmetries together with
a dual pair $(\widetilde{f}, \widetilde{G})$. We consider the so-called orbifold E-function of such a pair  $(f,G)$ which is a generating function for the exponents of the monodromy action on an orbifold version of the mixed Hodge structure on the Milnor fibre of $f$. We prove that the orbifold E-functions of Berglund--Henningson dual pairs coincide up to a sign depending on the number of variables. The proof is based on a relation between
monomials (say, elements of a monomial basis of the Milnor algebra of an invertible polynomial)
and elements of the whole symmetry group of the dual polynomial.
\end{abstract}
\maketitle

\section*{Introduction} 
Mirror symmetry is the famous observation by physicists that there exist pairs of Calabi--Yau manifolds whose Hodge diamonds are symmetric in a certain sense. P.~Berglund and T.~H\"ubsch \cite{BHu} suggested a method to construct such 
mirror symmetric pairs of Calabi--Yau manifolds. 
They considered a polynomial $f$ of a special form, a so called  {\em invertible} one, and its {\em Berglund--H\"ubsch transpose} $\widetilde f$: see below. The manifolds suggested in \cite{BHu} were resolutions of the following ones: the hypersurface in a weighted projective space defined by the equation $f=0$ and the quotient of the hypersurface in a weighted projective space defined by the equation $\widetilde{f}=0$ by a certain group of symmetries of the polynomial $\widetilde f$.
 In \cite{BHu} these polynomials
appeared as potentials of Landau--Ginzburg models.
In \cite{BHe} this construction was generalized to an orbifold setting. An orbifold
Landau--Ginzburg model of \cite{BHe} was described by a pair $(f, G)$, where $f$ is
an  invertible polynomial and $G$ is a (finite) abelian group of symmetries of $f$.
For a pair $(f, G)$ they defined the dual pair $(\widetilde{f}, \widetilde{G})$. There were
observed some symmetries of the manifolds constructed in this way. Namely, it was observed that the elliptic genera of dual pairs coincide up to sign \cite{BHe, KY}. In \cite{BHe}, there was
defined the Poincar\'e polynomial of a pair $(f, G)$ as a particular limit of the
elliptic genus of the corresponding orbifold. It is a fractional power polynomial in
one variable. The paper \cite{BHe} contains a physical style proof of the fact that
the Poincar\'e polynomials of dual pairs coincide up to sign. 

Certain aspects of Landau--Ginzburg models are related to singularity theory.
In \cite{ET1} (see also \cite{ET2}), for a pair $(f, G)$ with the group $G$ satisfying some restrictions,
there was defined a so called orbifold E-function of the pair which is a generating function for the exponents of the monodromy action on an orbifold version of the mixed Hodge structure on the Milnor fibre of $f$. It turns out that it agrees formally with a two-variables generalization
of the Poincar\'e polynomial. There exists a conjectural symmetry property of it
for Berglund--Henningson dual pairs (see Theorem~\ref{main}). It seems that this symmetry property was understandable for specialists in Landau--Ginzburg models, however to our knowledge there did not exist a proof of it.
Here we give a simple and readable proof of this symmetry property and therefore also a new proof of the statement of P.~Berg\-lund and M.~Henningson. The method of the proof is different
from that for the Poincar\'e polynomial in \cite{BHe}. It is based on a relation between
monomials (say, elements of a monomial basis of the Milnor algebra of an invertible polynomial)
and elements of the maximal symmetry group of the dual polynomial. This relation was
essentially described in \cite{Kreuzer}. It was generalized by M.~Krawitz \cite{Krawitz} to monomials invariant under a so called admissible subgroup $G$ of the maximal symmetry  group $G_f$ of $f$ and its dual group $\widetilde{G}$ (see also \cite[Proposition~3.6]{ET1}). Our proof of the symmetry property applies to any subgroup $G$ of the maximal symmetry group $G_f$ of $f$, not only to admissible ones. Moreover, as an intermediate step we derive a formula for the orbifold E-function expressing it in a symmetric way in terms of the group and the dual group (see Proposition~\ref{prop:formula}). 

As a corollary we get the statement obtained earlier
that the reduced orbifold zeta functions of dual pairs either coincide or are inverse to each other 
(depending on the number of variables): \cite{EG2}.

\section{Invertible polynomials}
Let $f(x_1,\dots, x_n)$ be a non-degenerate weighted homogeneous polynomial, namely, a polynomial with an isolated singularity at the origin with the property that there are positive integers $w_1,\dots ,w_n$ and $d$ such that 
$f(\lambda^{w_1} x_1, \dots, \lambda^{w_n} x_n) = \lambda^d f(x_1,\dots ,x_n)$ 
for $\lambda \in \CC^\ast$. We call $(w_1,\dots ,w_n;d)$ a system of {\em weights}. 
\begin{definition}
A non-degenerate weighted homogeneous polynomial $f(x_1,\dots ,x_n)$ is called {\em invertible} if 
the following conditions are satisfied:
\begin{enumerate}
\item the number of variables ($=n$) coincides with the number of monomials 
in the polynomial $f(x_1,\dots x_n)$, 
namely, 
\[
f(x_1,\dots ,x_n)=\sum_{i=1}^na_i\prod_{j=1}^nx_j^{E_{ij}}
\]
for some coefficients $a_i\in\CC^\ast$ and non-negative integers 
$E_{ij}$ for $i,j=1,\dots, n$,
\item a system of weights $(w_1,\dots ,w_n;d)$ can be uniquely determined by 
the polynomial $f(x_1,\dots ,x_n)$ up to a constant factor ${\rm gcd}(w_1,\dots ,w_n;d)$, 
namely, the matrix $E:=(E_{ij})$ is invertible over $\QQ$.
\end{enumerate}
\end{definition}

According to \cite{KS}, an invertible polynomial $f$ is a (Thom-Sebastiani) sum of invertible polynomials (in groups of different variables) of the following types:
\begin{enumerate}
\item[1)] $x_1^{a_1}x_2 + x_2^{a_2}x_3 + \ldots + x_{m-1}^{a_{m-1}}x_m + x_m^{a_m}$ (chain type; $m\ge 1$);
\item[2)] $x_1^{a_1}x_2 + x_2^{a_2}x_3 + \ldots + x_{m-1}^{a_{m-1}}x_m + x_m^{a_m}x_1$ (loop type; $m\ge 2$).
\end{enumerate}
\begin{remark}
In \cite{KS} the authors distinguished also polynomials of the so called Fermat type: $x_1^{a_1}$. One can regard the Fermat type polynomial $x_1^{a_1}$ as a chain type polynomial with $m=1$.
\end{remark}

\begin{definition}
Let $f(x_1,\dots ,x_n)=\sum_{i=1}^na_i\prod_{j=1}^nx_j^{E_{ij}}$ be an invertible polynomial. Without loss of generality, we may and will assume in the sequel that $\det E > 0$.
Define rational numbers $q_1,\dots, q_n$ by the unique solution of the equation
\begin{equation*}
E
\begin{pmatrix}
q_1\\
\vdots\\
q_n
\end{pmatrix}
=
\begin{pmatrix}
1\\
\vdots\\
1
\end{pmatrix}
.
\end{equation*}
Namely, set $q_i := w_i/d$, $i=1, \ldots , n$ for the system of weights $(w_1,\dots ,w_n;d)$.
\end{definition}

\begin{definition}
Let $f(x_1,\dots ,x_n)=\sum_{i=1}^na_i\prod_{j=1}^nx_j^{E_{ij}}$ be an invertible polynomial.
The {\em group of diagonal symmetries} $G_f$ of $f$ is
the finite abelian group defined by 
\[
G_f=\left\{(\lambda_1,\dots ,\lambda_n)\in (\CC^\ast)^n\, \left| \, \prod_{j=1}^n \lambda_j ^{E_{1j}}=\dots =\prod_{j=1}^n\lambda_j^{E_{nj}}=1 \right\} \right..
\]
\end{definition}
Note that the polynomial $f$ is invariant with respect to the natural action of 
$G_f$ on the variables. Namely, we have 
\[
f(\lambda_1 x_1, \dots, \lambda_n x_n) = f(x_1,\dots ,x_n)
\]
for $(\lambda_1,\dots ,\lambda_n)\in G_f$. Note that $G_f$ always contains the exponential grading operator 
\[
g_0:=({\bf e}[q_1], \ldots , {\bf e}[q_n]),
\]
where ${\bf e}[ - ] = e^{2 \pi \sqrt{-1} \cdot  -}$. Denote by $G_0$ the subgroup of $G_f$ generated by $g_0$.

Let $f(x_1, \ldots , x_n)$ be an invertible polynomial and $G$ a subgroup of $G_f$
For $g \in G$, we denote by ${\rm Fix}\, g :=\{x\in\CC^n~|~g\cdot x=x \}$ the fixed locus of $g$, 
by $n_g: = \dim {\rm Fix}\, g$ its dimension and by $f^g:=f|_{{\rm Fix}\, g}$ the restriction of $f$ to the fixed locus of $g$. 
Note that the function $f^g$ has an isolated singularity at the origin \cite[Proposition~5]{ET2}.

We introduce the notion of the age of an element of a finite group as follows: 
\begin{definition}[Ito--Reid \cite{IR}]
Let $g \in G$ be an element and $r$ be the order of $g$. Then $g$ has a unique expression of the following form  
\[
g=\left({\bf e}\left[\frac{a_1}{r}\right], \ldots, {\bf e}\left[\frac{a_n}{r}\right]\right) \quad \mbox{with } 0 \leq a_i < r.
\]
Such an element $g$ is often simply denoted by $g=\frac{1}{r}(a_1, \ldots, a_n)$. The {\em age} of $g$ is defined as 
\[ {\rm age}(g) := \frac{1}{r}\sum_{i=1}^n a_i. \] 
\end{definition}

Berglund and H\"{u}bsch \cite{BHu} defined the dual polynomial $\widetilde{f}$ of $f$ as follows.
\begin{definition}
Let $f(x_1,\dots ,x_n)$ be an invertible polynomial. 
The {\em Berglund--H\"{u}bsch transpose} $\widetilde{f}(x_1,\dots ,x_n)$ 
of $f(x_1,\dots ,x_n)$ is defined by
\begin{equation}
\widetilde{f}(x_1,\dots ,x_n):=\sum_{i=1}^na_i\prod_{j=1}^nx_j^{E_{ji}}.
\end{equation}
\end{definition}

Berglund and Henningson \cite{BHe} defined the dual group $\widetilde{G}$ of $G$ as follows.
\begin{definition} The {\em dual group} $\widetilde{G}$ is defined by
\begin{equation}
\widetilde{G} := {\rm Hom}(G_f/G, \CC^\ast).
\end{equation}
\end{definition}

We recall here some important properties of dual groups. 
\begin{proposition}[cf. Ebeling--Takahashi~\cite{ET2}, Krawitz~\cite{Krawitz}]
Let $f(x_1, \ldots, x_n)$ be an invertible polynomial.
\begin{enumerate}
\item There is a natural group isomorphism $G_{\widetilde{f}}\cong {\rm Hom}(G_f, \CC^\ast)$.
\item Let $G \subset G_f$ be a subgroup. One has ${\rm Hom}(G_{\widetilde{f}}/\widetilde{G}, \CC^\ast)=G$.
\item There is a natural group isomorphism $\widetilde{G_0}\cong {\rm SL}(n;\CC)\cap G_{\widetilde{f}}$.
\end{enumerate}
\end{proposition}

\section{Orbifold E-functions}
Let $f(x_1,\dots, x_n)$ be an invertible polynomial.
Steenbrink \cite{st:1} constructed a canonical mixed Hodge structure on the vanishing cohomology $H^{n-1}(Y_\infty,\CC)$ of $f$
with an automorphism $c$ given by the Milnor monodromy. 
In our situation, the automorphism $c$ coincides with the one induced by the action of $g_0\in G_f$ on $\CC^n$, 
which is semi-simple.
For $\lambda\in \CC$, let  
\begin{equation}
H^{n-1}(Y_\infty,\CC)_\lambda:={\rm Ker}(c-\lambda\cdot {\rm id}:H^{n-1}(Y_\infty,\CC)\longrightarrow 
H^{n-1}(Y_\infty,\CC)).
\end{equation}
Denote by $F^\bullet$ the Hodge filtration of the mixed Hodge structure.

We can naturally associate a $\QQ\times \QQ$-graded vector space to the mixed Hodge structure 
with an automorphism. 
\begin{definition}
Define the $\QQ\times \QQ$-graded vector space $\H_f:=\displaystyle\bigoplus_{p,q\in\QQ}\H^{p,q}_f$ as
\begin{enumerate}
\item If $p+q\ne n$, then  $\H^{p,q}_f:=0$.
\item If $p+q=n$ and $p\in\ZZ$, then  
\[
\H^{p,q}_f:={\rm Gr}^{p}_{F^\bullet}H^{n-1}(Y_\infty,\CC)_1.
\]
\item If $p+q=n$ and $p\notin\ZZ$, then  
\[
\H^{p,q}_f:={\rm Gr}^{[p]}_{F^\bullet}H^{n-1}(Y_\infty,\CC)_{e^{2\pi\sqrt{-1} p}},
\]
where $[p]$ is the largest integer less than $p$.
\end{enumerate}
\end{definition}

Let $f(x_1, \ldots , x_n)$ be an invertible polynomial and $G$ a subgroup of $G_f$.
We shall use the fact that $\H_{f^g}$ admits a natural $G$-action 
by restricting the $G$-action on $\CC^n$ to ${\rm Fix}\, g$ (which is well-defined since $G$ acts diagonally on $\CC^n$).

To the pair $(f,G)$ we can associate the following $\QQ\times \QQ$-graded super vector space:
\begin{definition}
Define the $\QQ\times \QQ$-graded super vector space $\H_{f,G}:=\H^{p,q}_{f,G,\bar{0}}\oplus \H^{p,q}_{f,G,\bar{1}}$ as 
\begin{equation}
\H_{f,G,\overline{0}}:=\bigoplus_{\substack{g\in G;\\ n_g\equiv 0\ (\text{\rm mod } 2)}}(\H_{f^g})^G(-{\rm age}(g),-{\rm age}(g)),
\end{equation}
\begin{equation}
\H_{f,G,\overline{1}}:=\bigoplus_{\substack{g\in G;\\ n_g\equiv 1\ (\text{\rm mod } 2)}}(\H_{f^g})^G(-{\rm age}(g),-{\rm age}(g)),
\end{equation}
where $(\H_{f^g})^G$ denotes the $G$-invariant subspace of $\H_{f^g}$.
\end{definition}
\begin{definition}
The {\em E-function} for the pair $(f,G)$ is 
\begin{equation}
E(f,G)(t,\bar{t})=\displaystyle\sum_{p,q\in\QQ}\left({\rm dim}_\CC (\H_{f,G,\bar{0}})^{p,q}-{\rm dim}_\CC (\H_{f,G,\bar{1}})^{p,q}\right)
\cdot t^{p-\frac{n}{2}}{\bar{t}}^{q-\frac{n}{2}}.
\end{equation}
\end{definition}
In general, we may have both $(\H_{f,G,\overline{0}})^{p,q}\ne 0$ and $(\H_{f,G,\overline{1}})^{p,q}\ne 0$ for some $p,q\in\QQ$.
Indeed, the pair $(f,G)=(x_1^4+x_2^4, \langle \frac{1}{4}(1,1)\rangle)$ gives such an example, namely, $(\H_{f,G,\overline{0}})^{1,1}$ and 
$(\H_{f,G,\overline{1}})^{1,1}$ are both one dimensional.

\begin{proposition}
Let $f(x_1, \ldots , x_n)$ be an invertible polynomial and $G$ a subgroup of $G_f$.
Assume $G\subset {\rm SL}(n;\CC)$ or $G\supset G_0$.
If $(\H_{f,G,\overline{i}})^{p,q}\ne 0$, then $(\H_{f,G,\overline{i+1}})^{p,q}= 0$ for all $p,q\in\QQ$ and $i\in\ZZ/2\ZZ$.
\end{proposition}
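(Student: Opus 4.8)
The plan is to decompose $\H_{f,G}$ according to the summands indexed by $g \in G$ and track the parity of $n_g$ against the Hodge bidegree $(p,q)$. Fix $g \in G$, and recall that $f^g$ has an isolated singularity at the origin in $\mathrm{Fix}\, g \cong \CC^{n_g}$. The graded space $\H_{f^g}$ is concentrated in bidegrees $(p',q')$ with $p'+q' = n_g$, so after the shift by $-\mathrm{age}(g)$ the contribution of $g$ to $\H_{f,G}$ lives in bidegrees with $p+q = n_g - 2\,\mathrm{age}(g)$. Hence the total parity statement follows if one shows: whenever two elements $g, g' \in G$ contribute to the \emph{same} bidegree $(p,q)$, the integers $n_g$ and $n_{g'}$ have the same parity. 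Since $g$ contributes only to $p+q = n_g - 2\,\mathrm{age}(g)$, it suffices to prove that $n_g - 2\,\mathrm{age}(g) \bmod 2$ is determined by $(p,q)$ alone — equivalently, that $p - q \bmod 1$ together with $p+q$ pins down $n_g \bmod 2$. Actually the cleanest route: show that for every $g$ contributing to $(p,q)$ one has $n_g \equiv p+q \equiv n - 2\cdot(\text{something depending only on the }\lambda\text{-eigenvalue})\pmod 2$, so the parity of $n_g$ is a function of $(p,q)$.

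First I would make the eigenvalue bookkeeping explicit. Write $g = \frac1r(a_1,\dots,a_n)$. The fixed locus $\mathrm{Fix}\, g$ consists of the coordinates $j$ with $a_j = 0$, so $n_g = \#\{j : a_j = 0\}$ and $\mathrm{age}(g) = \frac1r\sum_{j} a_j$. The monodromy $c$ on $\H_{f^g}$ acts by $\mathbf{e}[q_j^{(g)}]$-type data coming from the weight system of $f^g$; the point $(p,q)$ with $p \notin \ZZ$ sits in the $\lambda$-eigenspace with $\lambda = \mathbf{e}[2\pi\sqrt{-1}(p - \mathrm{age}(g))]$ (before the shift) — here I would use the definition of $\H^{p,q}_f$ cases (ii), (iii) applied to $f^g$. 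The key numerical identity I want is that the \emph{non-integrality defect} $p + q - n$ (computed on the unshifted $\H_{f^g}$, equal to $n_g - n$ up to the age shift) is congruent mod $2$ to a quantity that, combined with the eigenvalue $\lambda$ of $c$, is the same for all $g$ landing in a fixed $(p,q)$.

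This is where the hypothesis $G \subset \mathrm{SL}(n;\CC)$ or $G \supset G_0$ enters, and it is the main obstacle. In the $\mathrm{SL}$ case, $\det g = 1$ forces $\mathrm{age}(g) \in \ZZ$ for every $g \in G$, so the shifts $-\mathrm{age}(g)$ do not disturb the fractional part of the bidegree: then $p - q \bmod 1$ equals the corresponding quantity for $\H_{f^g}$, which via the Thom–Sebastiani/weight structure of $f^g$ determines $n_g \bmod 2$ by a parity argument on the number of Fermat-type factors versus chain/loop factors. In the $G \supset G_0$ case I would instead use that $g_0 \in G$ acts as the monodromy itself, so the $G$-invariance built into $(\H_{f^g})^G$ restricts the $\lambda$-eigenspaces that can appear, again forcing a congruence between $n_g$ and the bidegree. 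The delicate part is checking that in each of the two cases the parity of $n_g$ is genuinely a function of $(p,q)$ and not merely of $(p,q)$ together with $g$; I expect this to reduce, after the weight-system computation for $f^g$ using the chain/loop classification recalled in Section~1, to the elementary observation that $\dim_\CC \H^{p',q'}_{f^g} \neq 0$ forces $p' + q' = n_g$, so that two contributions to one bidegree with differing $n_g$ parity would have to come with age shifts differing by an odd integer — which is exactly what $G \subset \mathrm{SL}$ (ages all integral, in fact the relevant differences even) or $G \supset G_0$ (controlling which eigenspaces survive $G$-invariance) rules out. I would verify the $n=2$ example $(x_1^4+x_2^4,\langle\frac14(1,1)\rangle)$ shows the hypothesis is necessary: there $G \not\subset \mathrm{SL}(2;\CC)$ and $G \not\supset G_0$, and indeed both parities occur at $(1,1)$.
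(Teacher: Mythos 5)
Your core idea is the paper's: decompose over $g\in G$, use that $\H_{f^g}$ is concentrated in bidegrees with $p'+q'=n_g$, so every $g$ contributing to a fixed $(p,q)$ satisfies $p+q=n_g+2\,{\rm age}(g)$, and then show the two hypotheses each force the parity of $n_g$ to be a function of $(p,q)$. The ``elementary observation'' in your last sentences is in fact the entire proof, and you should promote it from an afterthought to the argument itself: if $g,g'$ both contribute to $(p,q)$ then $n_g-n_{g'}=2({\rm age}(g')-{\rm age}(g))$, so a parity mismatch forces ${\rm age}(g')-{\rm age}(g)$ to be an odd multiple of $\tfrac12$ (not, as you write, ``age shifts differing by an odd integer'' --- twice the age difference would be odd). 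In the ${\rm SL}$ case all ages are integers, which kills this immediately; your detour through ``$p-q\bmod 1$ determines $n_g\bmod 2$ via the Thom--Sebastiani/weight structure of $f^g$ and a count of Fermat versus chain/loop factors'' is both unnecessary and not a viable route ($p-q\bmod 1$ does not determine $n_g\bmod 2$, and the weight system of $f^g$ plays no role here). In the case $G\supset G_0$ the missing step you gesture at is this: a nonzero $G$-invariant, hence $G_0$-invariant, subspace of $\H^{p-{\rm age}(g),\,q-{\rm age}(g)}_{f^g}$ forces the monodromy, which is the automorphism induced by $g_0$ and acts there by ${\bf e}[p-{\rm age}(g)]$, to be trivial, so $p-{\rm age}(g)\in\ZZ$ for every contributing $g$; hence age differences of contributing elements are integers and the half-odd-integer scenario is again excluded. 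With those two repairs your sketch coincides with the paper's proof.
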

\begin{proof}
Note that $(\H_{f,G,\overline{i}})^{p,q}\ne 0$ if only if 
there exists $g\in G$ with $n_g\equiv i$ $(\text{\rm mod } 2)$ such that $(\H^{p-{\rm age}(g),q-{\rm age}(g)}_{f^g})^G\ne 0$.
In particular, we have $n_g=({\rm age}(g)-p)+({\rm age}(g)-q)$ and $n_g\equiv i$ $(\text{\rm mod } 2)$ for such $g$.
If the group $G$ is a subgroup of ${\rm SL}(n;\CC)$, then ${\rm age}(g)\in\ZZ$ for all $g\in G$. 
Therefore, there does not exist $g'\in G$ such that $n_{g'}=({\rm age}(g')-p)+({\rm age}(g')-q)$ and $n_{g'}\equiv i+1$ $(\text{\rm mod } 2)$ since $n_{g'}=2\cdot {\rm age}(g')-p-q\equiv -p-q\equiv n_{g}\equiv i$ $(\text{\rm mod } 2)$.
If the group $G$ contains $G_0$, then $p-{\rm age}(g), q-{\rm age}(g)\in\ZZ$ for all $g\in G$ 
since we must have $(\H^{p-{\rm age}(g),q-{\rm age}(g)}_{f^g})^{G_0}\ne 0$ which means the Milnor monodromy, 
identified with the automorphism induced by $g_0$, acts trivially on this vector space. 
Therefore, there does not exist $g'\in G$ such that $n_{g'}=({\rm age}(g')-p)+({\rm age}(g')-q)$ and $n_{g'}\equiv i+1$ $(\text{\rm mod } 2)$ since $n_{g'}=({\rm age}(g')-p)+({\rm age}(g')-q)=({\rm age}(g)-p)+({\rm age}(g)-q)+2({\rm age}(g')-{\rm age}(g))
=n_g+2({\rm age}(g')-{\rm age}(g))\equiv n_{g}\equiv i$ $(\text{\rm mod } 2)$.
\end{proof}

\begin{definition}
Let $f(x_1, \ldots , x_n)$ be an invertible polynomial and $G$ a subgroup of $G_f$.
Assume $G\subset {\rm SL}(n;\CC)$ or $G\supset G_0$.
The {\em Hodge numbers} for the pair $(f,G)$ are
\begin{equation}
h^{p,q}(f,G):=\dim_\CC (\H_{f,G,\overline{0}})^{p,q}+\dim_\CC (\H_{f,G,\overline{1}})^{p,q},\quad p,q\in\QQ.
\end{equation}
\end{definition}

\begin{proposition}\label{prop4}
Let $f(x_1, \ldots , x_n)$ be an invertible polynomial and $G$ a subgroup of $G_f$.
The E-function is given by 
\begin{equation}
E(f,G)(t,\bar{t}):=
\begin{cases}
\displaystyle\sum_{p,q\in\QQ}(-1)^{p+q}h^{p,q}(f,G) \cdot 
t^{p-\frac{n}{2}}\bar{t}^{q-\frac{n}{2}},\ \text{if }G \subset {\rm SL}_n(\CC),\\
\displaystyle\sum_{p,q\in\QQ}(-1)^{-p+q}h^{p,q}(f,G) \cdot 
t^{p-\frac{n}{2}}\bar{t}^{q-\frac{n}{2}}, \ \text{if } G_0 \subset G.
\end{cases}
\end{equation}
\end{proposition}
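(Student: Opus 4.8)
The plan is to reduce everything to the Proposition proved just above --- the one asserting that for each fixed $(p,q)$ at most one of $(\H_{f,G,\bar 0})^{p,q}$ and $(\H_{f,G,\bar 1})^{p,q}$ is nonzero. First I would recall that, unwinding the definitions of the E-function and of the Hodge numbers,
\[
E(f,G)(t,\bar t)=\sum_{p,q\in\QQ}\bigl(\dim_\CC(\H_{f,G,\bar 0})^{p,q}-\dim_\CC(\H_{f,G,\bar 1})^{p,q}\bigr)\,t^{p-\frac n2}\bar t^{q-\frac n2}
\]
and $h^{p,q}(f,G)=\dim_\CC(\H_{f,G,\bar 0})^{p,q}+\dim_\CC(\H_{f,G,\bar 1})^{p,q}$. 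By the preceding Proposition, for a fixed $(p,q)$ with $h^{p,q}(f,G)\ne 0$ exactly one of the two super-pieces is nonzero; denote by $i\in\ZZ/2\ZZ$ the index of that piece. Then the first display shows $\dim_\CC(\H_{f,G,\bar 0})^{p,q}-\dim_\CC(\H_{f,G,\bar 1})^{p,q}=(-1)^{i}h^{p,q}(f,G)$, so it only remains to identify $(-1)^i$ with the sign in the statement.

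By the definition of $\H_{f,G}$, the nonzero super-piece $(\H_{f,G,\bar i})^{p,q}$ is a direct sum of subspaces $(\H^{p-{\rm age}(g),q-{\rm age}(g)}_{f^g})^G$ over elements $g\in G$ with $n_g\equiv i\pmod 2$; fix any such $g$. Since this subspace is nonzero, the grading condition on $\H_{f^g}$ (vanishing outside total degree $n_g$) gives $n_g=(p-{\rm age}(g))+(q-{\rm age}(g))$. If $G\subset{\rm SL}(n;\CC)$, then ${\rm age}(g)\in\ZZ$, hence $p+q\in\ZZ$ and $n_g\equiv p+q\pmod 2$, so $(-1)^i=(-1)^{p+q}$. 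If $G_0\subset G$, then --- as observed in the proof of the preceding Proposition, because $G_0$-invariance forces the relevant graded pieces to lie in the $1$-eigenspace of the monodromy --- both $a:=p-{\rm age}(g)$ and $b:=q-{\rm age}(g)$ are integers; since $n_g=a+b$ and $a-b=p-q$, we get $n_g\equiv a-b=p-q\equiv -p+q\pmod 2$, so $(-1)^i=(-1)^{-p+q}$. Substituting these signs into the first display yields the two formulas in the statement.

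I do not expect any genuine obstacle here. The only point that needs care is the consistency assertion --- that all elements $g$ contributing to a fixed bidegree $(p,q)$ carry the same parity of $n_g$, so that the factor $(-1)^i$ is well defined --- but this is exactly what the preceding Proposition provides. The rest is the bookkeeping of separating the contributions to $\H_{f,G}$ according to the parity of $n_g$ and of reading off $n_g\bmod 2$ from $p$ and $q$ in the two cases $G\subset{\rm SL}(n;\CC)$ and $G_0\subset G$.
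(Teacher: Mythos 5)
Your proof is correct and follows essentially the same route as the paper: in each case you compute the parity of $n_g$ for any $g$ contributing to a fixed bidegree $(p,q)$ (using ${\rm age}(g)\in\ZZ$ when $G\subset{\rm SL}(n;\CC)$, and $p-{\rm age}(g),\,q-{\rm age}(g)\in\ZZ$ when $G_0\subset G$) and identify the resulting sign, with the preceding Proposition guaranteeing the sign is well defined. The only cosmetic difference is that you write $n_g=(p-{\rm age}(g))+(q-{\rm age}(g))$ where the paper writes the negative of this; the parities agree, so nothing changes.
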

\begin{proof}
If $G\subset{\rm SL}(n;\CC)$, then we have $n_g=2\cdot {\rm age}(g)-p-q\equiv p+q$  $(\text{\rm mod } 2)$ for $g\in G$ with $n_g\equiv i$ $(\text{\rm mod } 2)$ such that $(\H^{p-{\rm age}(g),q-{\rm age}(g)}_{f^g})^G\ne 0$, which implies the first statement.
If $G\supset G_0$, then we have $n_g=({\rm age}(g)-p)+({\rm age}(g)-q)=-p+q+2({\rm age}(g)-q)\equiv -p+q$  $(\text{\rm mod } 2)$ for $g\in G$ with $n_g\equiv i$ $(\text{\rm mod } 2)$ such that $(\H^{p-{\rm age}(g),q-{\rm age}(g)}_{f^g})^G\ne 0$, which implies the second statement.
\end{proof}
\begin{remark}
Note that if $G_0\subset G\subset {\rm SL}_n(\CC)$, then both descriptions of E-functions coincide, 
more precisely, one has $h^{p,q}(f,G)=0$ for $p,q\notin \ZZ$.
\end{remark}
\begin{proposition}[Steenbrink \cite{st:1}]
Let $f(x_1, \ldots , x_n)$ be an invertible polynomial.
One has 
\[
E(f,\{1\})(t,\bar{t})=(-1)^n\prod_{i=1}^n\frac{1-\left(\frac{\bar{t}}{t}\right)^{1-q_i}}{1-\left(\frac{\bar{t}}{t}\right)^{q_i}}\left(\frac{\bar{t}}{t}\right)^{q_i-\frac{1}{2}}.
\]
\end{proposition}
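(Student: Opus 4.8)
The plan is to reduce to the trivial group, where the $E$-function records only the ordinary mixed Hodge structure on the Milnor fibre, and then to combine Steenbrink's description of that structure for a quasihomogeneous singularity with the classical formula for the Poincar\'e series of the Milnor algebra.

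For $G=\{1\}$ the only element is the identity, for which ${\rm Fix}\,g=\CC^n$, $n_g=n$, ${\rm age}(g)=0$ and $f^g=f$; hence $\H_{f,\{1\}}$ is just $\H_f$, placed in the even (resp. odd) part according to the parity of $n$. By the definition of $E(f,G)$ this gives
\[
E(f,\{1\})(t,\bar t)=(-1)^n\sum_{p,q\in\QQ}\dim_\CC\H_f^{p,q}\cdot t^{p-\frac n2}\bar t^{q-\frac n2}.
\]
Since $\H_f^{p,q}=0$ unless $p+q=n$, setting $s:=\bar t/t$ turns each monomial $t^{p-n/2}\bar t^{q-n/2}$ into $s^{n/2-p}$, so that
\[
E(f,\{1\})(t,\bar t)=(-1)^n\sum_{p}\bigl(\dim_\CC\H_f^{p,n-p}\bigr)\,s^{\frac n2-p}.
\]

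Next I would invoke the classical description, due to Steenbrink, of the mixed Hodge structure on $H^{n-1}(Y_\infty,\CC)$ of a quasihomogeneous polynomial (via the identification $\Omega^n/df\wedge\Omega^{n-1}\cong M_f\cdot dx_1\wedge\cdots\wedge dx_n$, together with the fact, already used above, that the Milnor monodromy is the $g_0$-action). It says: if $\{\mathbf x^{\mathbf k}=x_1^{k_1}\cdots x_n^{k_n}\}$ runs over a monomial basis of the Milnor algebra $M_f=\CC[x_1,\dots,x_n]/{\rm Jac}(f)$, then the forms $\mathbf x^{\mathbf k}\,dx_1\wedge\cdots\wedge dx_n$ descend to a basis of $\bigoplus_{p,q}\H_f^{p,q}$ in which the class attached to $\mathbf x^{\mathbf k}$ has bidegree $(\ell(\mathbf k),n-\ell(\mathbf k))$, where $\ell(\mathbf k):=\sum_{i=1}^n(k_i+1)q_i$ (its Milnor monodromy eigenvalue is ${\bf e}[\ell(\mathbf k)]$ and its Hodge degree is $[\ell(\mathbf k)]$; when $\ell(\mathbf k)\in\ZZ$ this is exactly case (ii) in the definition of $\H_f$, otherwise case (iii)). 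Equivalently, the spectrum of the quasihomogeneous singularity $f$ consists of the numbers $\ell(\mathbf k)$. Consequently
\[
E(f,\{1\})(t,\bar t)=(-1)^n\sum_{\mathbf k}s^{\frac n2-\ell(\mathbf k)},
\]
the sum being over a monomial basis of $M_f$.

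What remains is a bookkeeping computation. Writing $\deg x_i=w_i$ and $d$ for the quasidegree, one has $\ell(\mathbf k)=\frac1d\deg(\mathbf x^{\mathbf k})+\sum_i q_i$, while the multiset of degrees of a monomial basis of the Artinian complete intersection $M_f$ has Poincar\'e series $\sum_{\mathbf k}u^{\deg(\mathbf x^{\mathbf k})}=\prod_{i=1}^n\frac{1-u^{d-w_i}}{1-u^{w_i}}$. Substituting $u=s^{-1/d}$ and using $s^{n/2-\sum_i q_i}=\prod_i s^{1/2-q_i}$ together with the elementary identity $\frac{1-s^{q_i-1}}{1-s^{-q_i}}=s^{2q_i-1}\,\frac{1-s^{1-q_i}}{1-s^{q_i}}$ yields
\[
\sum_{\mathbf k}s^{\frac n2-\ell(\mathbf k)}=s^{\frac n2-\sum_i q_i}\prod_{i=1}^n\frac{1-s^{q_i-1}}{1-s^{-q_i}}=\prod_{i=1}^n\frac{1-s^{1-q_i}}{1-s^{q_i}}\,s^{q_i-\frac12},
\]
which, with $s=\bar t/t$, is the asserted formula.

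The only step that is not formal is the use of Steenbrink's description in the third paragraph: one has to match his computation of the vanishing mixed Hodge structure of a quasihomogeneous singularity with the precise $\QQ\times\QQ$-grading used here to define $\H_f$, in particular pinning down that the bidegree of $\mathbf x^{\mathbf k}\,dx_1\wedge\cdots\wedge dx_n$ is exactly $(\ell(\mathbf k),n-\ell(\mathbf k))$, including the borderline case $\ell(\mathbf k)\in\ZZ$ where the definition of $\H_f$ switches between its cases. Everything else is the definition of $E(f,G)$ for $G=\{1\}$ and an elementary manipulation of the Poincar\'e series of the Milnor algebra.
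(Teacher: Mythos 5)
Your argument is correct and is essentially the one the paper leaves implicit: the paper states this proposition without proof, as a citation of Steenbrink \cite{st:1}, and your reduction to the trivial group, the identification of the bigraded pieces of $\H_f$ with a monomial basis of the Milnor algebra via the spectral numbers $\ell(\mathbf{k})=\sum_i(k_i+1)q_i$, and the Poincar\'e-series manipulation constitute the standard derivation (consistent with Remark~\ref{rem:Poincare}). The single non-formal point --- matching Steenbrink's Hodge-theoretic grading to the paper's convention for $\H_f^{p,q}$, including the borderline case $\ell(\mathbf{k})\in\ZZ$ --- is exactly where the citation does the work, and you correctly flag it as such.
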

\begin{remark}\label{rem:Poincare}
Note that $E(f,\{1\})(1,y)$ is the Poincar\'e polynomial of $\Omega_f$ in $y$ 
with respect to the degrees $q_1,\dots, q_n$ where $\Omega_f:= \Omega^n(\CC^n)/(df \wedge \Omega^{n-1}(\CC^n))$ and $\Omega^p(\CC^n)$ is the space of regular $p$-forms on $\CC^n$. 
\end{remark}
\begin{proposition}\label{formula}
Let $f(x_1, \ldots , x_n)$ be an invertible polynomial and $G$ a subgroup of $G_f$.
Write $g \in G$ in the form 
$(\lambda_1(g), \ldots, \lambda_n(g))$ where $\lambda_i(g)={\bf e}[a_iq_i]$. The E-function for the pair $(f,G)$ is naturally given by the following formula$:$ 
\begin{equation}
E(f,G)(t,\bar{t})=\sum_{g\in G}E_g(f,G)
(t,\bar{t}),
\end{equation}
\begin{equation}
E_g(f,G)(t,\bar{t})
:=\left(\prod_{q_ia_i \not\in \ZZ}\left({t}{\bar{t}}\right)^{q_ia_i-[q_ia_i]-\frac{1}{2}} \right)
\cdot (-1)^{n_g} \frac{1}{|G|} \sum_{h\in G}\prod_{q_ia_i\in\ZZ}
\frac{\lambda_i(h)\left(\frac{\bar{t}}{t}\right)^{q_i}-\left(\frac{\bar{t}}{t}\right)}{1-\lambda_i(h)\left(\frac{\bar{t}}{t}\right)^{q_i}}\left(\frac{\bar{t}}{t}\right)^{-\frac{1}{2}}.
\end{equation}
Here $[a]$ for $a \in \QQ$ denotes the largest integer less than $a$.
\end{proposition}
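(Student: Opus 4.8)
The plan is to compute the $\QQ\times\QQ$-graded super vector space $\H_{f,G}$ directly from the definitions, reducing everything to the case $G=\{1\}$ where Steenbrink's formula is available. First I would fix $g\in G$ and analyze the summand $(\H_{f^g})^G(-{\rm age}(g),-{\rm age}(g))$. Since $f^g=f|_{{\rm Fix}\,g}$ has an isolated singularity at the origin (cited from \cite{ET2}), and since ${\rm Fix}\,g$ is a coordinate subspace of dimension $n_g$ spanned by exactly those $x_i$ with $\lambda_i(g)=1$, i.e.\ with $q_ia_i\in\ZZ$, the polynomial $f^g$ is again invertible in those $n_g$ variables, with the same weights $q_i$ for the surviving indices. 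The crucial point is that the $G$-action on $\H_{f^g}$ is diagonal, so the $G$-invariant part can be extracted by averaging the character of $h\in G$ acting on $\H_{f^g}$. Thus I would write
\[
\dim_\CC(\H_{f^g})^G_{\text{as graded space}} = \frac{1}{|G|}\sum_{h\in G} \operatorname{tr}\bigl(h \mid \H_{f^g}\bigr),
\]
and combine this with the refinement of Steenbrink's proposition that records not just dimensions but the trace of a diagonal symmetry $h$ on $\H_{f^g}$.

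The main technical step is therefore to establish an equivariant version of the last displayed Steenbrink formula: for an invertible polynomial $F$ in $m$ variables and a diagonal symmetry $h=(\mu_1,\dots,\mu_m)\in G_F$, the $h$-equivariant E-function of $(F,\{1\})$ equals
\[
(-1)^m \prod_{i=1}^m \frac{\mu_i\left(\frac{\bar t}{t}\right)^{q_i}-\left(\frac{\bar t}{t}\right)}{1-\mu_i\left(\frac{\bar t}{t}\right)^{q_i}}\left(\frac{\bar t}{t}\right)^{-\frac12}.
\]
This should follow from the fact that $\H_F=\Omega_F$ as a graded vector space with its monomial basis $\prod x_i^{b_i}$, on which $h$ acts by $\prod \mu_i^{b_i+1}$ (the $+1$ coming from the volume form $dx_1\wedge\dots\wedge dx_m$), and the bigrading of a monomial is governed by $\sum(b_i+1)q_i$; the Milnor-monodromy twist in cases (ii)--(iii) of the definition of $\H_f$ is exactly what converts the naive monomial grading into the stated $t^{p-n/2}\bar t^{q-n/2}$ shape, and the product over $i$ of geometric-series-type factors encodes the independence of the exponents $b_i$ across a Thom--Sebastiani decomposition. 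I would verify this factor-by-factor on chain and loop blocks using the Kreuzer--Skarke classification recalled in Section~1; for the untwisted Fermat/chain pieces this is literally Steenbrink's computation with a character inserted.

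Granting the equivariant formula, the proof assembles quickly. Applying it to $F=f^g$ in the $n_g$ variables indexed by $\{i: q_ia_i\in\ZZ\}$, with $h$ restricted to those coordinates acting by $\lambda_i(h)$, produces exactly the second product in $E_g(f,G)(t,\bar t)$ together with the sign $(-1)^{n_g}$ and the factor $\frac{1}{|G|}\sum_{h\in G}$ coming from the $G$-averaging. The shift by $(-{\rm age}(g),-{\rm age}(g))$ multiplies by $(t\bar t)^{-{\rm age}(g)}$, and since ${\rm age}(g)=\sum_{i}(q_ia_i-[q_ia_i])$ where the sum runs over all $i$ with $q_ia_i\notin\ZZ$ (because for $q_ia_i\in\ZZ$ the corresponding $a_i$-entry contributes $0$ to the age by the normalization $0\le a_i<r$), this shift together with the overall $t^{-n/2}\bar t^{-n/2}$ bookkeeping inside the definition of the E-function produces precisely the prefactor $\prod_{q_ia_i\notin\ZZ}(t\bar t)^{q_ia_i-[q_ia_i]-\frac12}$. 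Summing over $g\in G$ gives $E(f,G)=\sum_{g\in G}E_g(f,G)$. The hard part, and the only place where real work beyond bookkeeping is needed, is the equivariant Steenbrink formula of the second paragraph; everything else is tracking Tate twists and the age.
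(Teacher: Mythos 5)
Your outline is, in substance, the calculation that the paper simply delegates to \cite[Theorem~6]{ET2}: decompose over $g\in G$, compute the $h$-equivariant Poincar\'e series of $\H_{f^g}$, average over $h\in G$ to project onto invariants, and track the twist by $(-\mathrm{age}(g),-\mathrm{age}(g))$. The end formula you identify is the right one, so I regard the approach as correct. Two points need attention, one cosmetic and one substantive.

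The cosmetic one: with the convention $V(-a,-a)^{p,q}=V^{p-a,q-a}$ used in the paper (a class of bidegree $(p_0,q_0)$ in $\H_{f^g}$ lands in bidegree $(p_0+\mathrm{age}(g),q_0+\mathrm{age}(g))$ of $\H_{f,G}$), the twist multiplies the generating monomial by $(t\bar t)^{+\mathrm{age}(g)}$, not $(t\bar t)^{-\mathrm{age}(g)}$ as you wrote; combined with replacing $t^{p_0-n_g/2}\bar t^{q_0-n_g/2}$ by $t^{p-n/2}\bar t^{q-n/2}$ this gives the exponent $\mathrm{age}(g)-\tfrac{n-n_g}{2}=\sum_{q_ia_i\notin\ZZ}\bigl(q_ia_i-[q_ia_i]-\tfrac12\bigr)$, which is exactly the stated prefactor, so your conclusion is right even though the intermediate sentence is not. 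The substantive one: your proposed justification of the equivariant Steenbrink formula via ``independence of the exponents $b_i$'' only works across Thom--Sebastiani summands, not within a chain or loop block --- Kreuzer's monomial basis of $A_{f_i}$ is \emph{not} a product of independent exponent ranges (see the exclusion condition recalled in the proof of Proposition~\ref{Omega to G}), so the sum over the basis does not factor termwise for free, and a direct case-by-case check would be painful. The clean argument is not a basis count: since $\partial f/\partial x_1,\dots,\partial f/\partial x_m$ form a regular sequence of $h$-eigenvectors of eigenvalue $\mu_i^{-1}$ and degree $1-q_i$, the $h$-equivariant Hilbert series of $A_F$ is $\prod_i(1-\mu_i^{-1}u^{1-q_i})/(1-\mu_iu^{q_i})$ with $u=\bar t/t$, and twisting by the volume form (character $\prod_i\mu_i$, degree $\sum_iq_i$) yields $\prod_i(\mu_iu^{q_i}-u)/(1-\mu_iu^{q_i})$ as claimed. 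With that lemma, plus the equivariance of Steenbrink's identification of $\mathrm{Gr}_{F^\bullet}H^{n_g-1}(Y_\infty)$ with $\Omega_{f^g}$ (which is what \cite{ET2} supplies and which you, like the paper, take as given), the rest of your bookkeeping goes through.
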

\begin{proof}
This is obtained from the definition of $E_g(f,G)(t,\bar{t})$ by a straightforward calculation. 
See \cite[Theorem~6]{ET2}, for example.
\end{proof}

\section{Main theorem}
\begin{theorem}\label{main}
Let $f(x_1, \ldots , x_n)$ be an invertible polynomial and $G$ a subgroup of $G_f$. Then 
\begin{equation}
E(f,G)(t, \bar{t}) = (-1)^n  E(\widetilde{f},\widetilde{G})(t^{-1},\bar{t}). 
\end{equation}
\end{theorem}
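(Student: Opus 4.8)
### Proof proposal

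\medskip

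The plan is to reduce the theorem to a purely combinatorial identity between the terms $E_g(f,G)$ from Proposition~\ref{formula} and the corresponding terms $E_{\widetilde g}(\widetilde f, \widetilde G)$, by exhibiting an explicit bijection between the symmetry groups that is compatible with Berglund--H\"ubsch transposition. The first step is to set up the Thom--Sebastiani decomposition: by \cite{KS} an invertible polynomial is a sum of chain and loop type polynomials in disjoint sets of variables, and both the E-function and the group $G_f$ factor as (restricted) products over the summands. More precisely, for the full group $G_f$ one has $G_f = \prod G_{f_k}$ and $E(f,\{1\})$ is multiplicative; for a general subgroup $G$ the multiplicativity is subtler (the group need not split), so I would first prove the main identity for $G = G_f$ and $G = \{1\}$ using the Thom--Sebastiani product, and then bootstrap to arbitrary $G$ via the symmetric formula in Proposition~\ref{prop:formula}. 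Indeed, the whole point of deriving that symmetric formula as an intermediate step is that it expresses $E(f,G)$ in a way that is manifestly exchanged with $E(\widetilde f,\widetilde G)$ under $t \leftrightarrow t^{-1}$ once one knows the duality $\mathrm{Hom}(G_f/G,\CC^\ast) = \widetilde G$ and $\mathrm{Hom}(G_{\widetilde f}/\widetilde G,\CC^\ast)=G$ from Proposition~(ii).

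\medskip

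The core technical input is the Kreuzer--Krawitz correspondence (\cite{Kreuzer}, \cite{Krawitz}, \cite[Proposition~3.6]{ET1}): a monomial basis of the Milnor algebra of $f$, or rather of $f^g$ for fixed $g$, is matched with group elements of $G_{\widetilde f}$. I would recast this as follows. Writing $g = (\mathbf{e}[a_1 q_1],\dots,\mathbf{e}[a_n q_n]) \in G_f$, the factor of $E_g(f,G)$ coming from the variables with $q_i a_i \in \ZZ$ is, up to the shift, the E-function of the restricted invertible polynomial $f^g$ with its induced group action, while the variables with $q_i a_i \notin \ZZ$ contribute only the monomial prefactor $(t\bar t)^{q_i a_i - [q_i a_i] - 1/2}$ with the sign $(-1)^{n_g}$. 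On the dual side, an element $h \in G$ appearing in the inner averaging sum $\frac{1}{|G|}\sum_{h}$ corresponds, under the pairing $G \times (G_{\widetilde f}/\widetilde G) \to \CC^\ast$, to a coset of $\widetilde G$ in $G_{\widetilde f}$; and the fixed locus $\mathrm{Fix}\, g$ on the $f$-side corresponds to the fixed locus of the dual element $\widetilde g$ (the image of the relevant character) on the $\widetilde f$-side, with $n_g$ and $n_{\widetilde g}$ related by $n_g + n_{\widetilde g} = n$ when $G = G_f$ — this is the source of the global sign $(-1)^n$ and the inversion $t \mapsto t^{-1}$. The substitution $t \mapsto t^{-1}$ sends $\bar t/t \mapsto \bar t t = t\bar t$ and vice versa, which is exactly the exchange needed to turn the "Milnor fibre" part $\prod (1-(\bar t/t)^{1-q_i})/(1-(\bar t/t)^{q_i}) (\bar t/t)^{q_i - 1/2}$ of $f$ into the corresponding expression for $\widetilde f$, because the dual exponents satisfy the standard reciprocity relating $q_i$ of $f$ to the weights of $\widetilde f$.

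\medskip

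In detail, the key steps in order: (1) prove multiplicativity of $E(f,\{1\})$ and of $E(f,G_f)$ under Thom--Sebastiani, and verify the theorem directly on the atomic chain- and loop-type pieces by the explicit product formula in Proposition~(Steenbrink) — here one checks that $(-1)^n E(\widetilde f,\{1\})(t^{-1},\bar t)$ equals $E(f, G_f)(t,\bar t)$ using that summing over $G_f$ on one side corresponds to restricting to $\{1\}$ on the dual side, a finite geometric-series computation; (2) establish the symmetric formula of Proposition~\ref{prop:formula}, rewriting $E(f,G)(t,\bar t)$ as a double sum over $G \times \widetilde G$ (or over $G_f/G \times G_f$, using the pairing) in which the roles of "monomials for $f$" and "group elements for $\widetilde f$" are interchangeable — this is where the Kreuzer--Krawitz relation does the work, extended to \emph{all} subgroups $G$, not just admissible ones, simply because the relation is between $G_f$ and $G_{\widetilde f}$ and any $G \subset G_f$ dualizes to $\widetilde G \subset G_{\widetilde f}$ by Proposition~(i)--(ii); (3) apply $t \mapsto t^{-1}$ to the symmetric formula and read off that it becomes, up to $(-1)^n$, the same formula with $(f,G)$ replaced by $(\widetilde f, \widetilde G)$.

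\medskip

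The main obstacle I anticipate is step (2): making the Kreuzer--Krawitz bijection work uniformly for an arbitrary (not necessarily admissible, not necessarily containing $G_0$ or contained in $\mathrm{SL}_n$) subgroup $G$, and keeping precise track of the age shifts $(-\mathrm{age}(g),-\mathrm{age}(g))$ and the parity $n_g \bmod 2$ on both sides so that the signs match. The age of $g$ for $f$ and the age of the dual element for $\widetilde f$ do not individually transform simply, but the \emph{combination} that enters the exponent of $t\bar t$ — namely $\sum (q_i a_i - [q_i a_i] - 1/2)$ over the "bad" variables — should be invariant under the duality (this is essentially the statement that $\mathrm{age}(g) + \mathrm{age}(\widetilde g) = \#\{i : q_i a_i \notin \ZZ\}$ up to the restriction, a relation one can verify on chain and loop blocks), and the sign $(-1)^{n_g}$ combines with $(-1)^{n_{\widetilde g}}$ to $(-1)^n$. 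Verifying this bookkeeping on the chain and loop normal forms, and checking that the "$q_i a_i \in \ZZ$" variables for $g$ are exactly the "bad" variables for the dual element restricted appropriately, is the delicate calculation, but it is finite and structural rather than deep.
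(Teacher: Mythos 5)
Your plan is essentially the paper's proof: use the Kreuzer--Krawitz correspondence between a monomial basis of $\Omega_{f_i}$ and elements of $G_{\widetilde{f}_i}$ on each chain/loop block to rewrite $E(f,G)$ as a double sum over $G\times\widetilde{G}$ that is manifestly symmetric (the paper's Proposition~\ref{prop:formula}), and then conclude by substituting $t\mapsto t^{-1}$ together with the parity relation $(-1)^{n_g}=(-1)^{n-n_{\widetilde{g}}}$ for the contributing pairs. Your preliminary step (1) for $G=\{1\}$ and $G=G_f$ is superfluous once step (2) is carried out for arbitrary $G$, and the one bookkeeping detail you do not name explicitly --- that for a loop block with an even number of variables the identity element has \emph{two} monomial preimages, forcing the multiplicity $m_{g,\widetilde{g}}=2^r$ in the double sum --- is exactly the kind of finite verification you correctly anticipate as the delicate part.
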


\begin{corollary}
Let $f(x_1, \ldots , x_n)$ be an invertible polynomial and $G$ a subgroup of $G_f\cap {\rm SL}(n;\CC)$. Then 
for all $p,q\in\QQ$, one has
\begin{equation}
h^{p,q}(f,G)=h^{n-p,q}(\widetilde{f},\widetilde{G}).
\end{equation}
\end{corollary}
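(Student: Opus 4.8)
The plan is to obtain the corollary as a direct consequence of Theorem~\ref{main}, converting the stated identity of $E$-functions into an identity of coefficients by means of Proposition~\ref{prop4}. The first step is to check that the hypothesis $G\subset G_f\cap{\rm SL}(n;\CC)$ puts the two sides in the two \emph{different} cases of Proposition~\ref{prop4}: the pair $(f,G)$ falls under the first case since $G\subset{\rm SL}(n;\CC)$, whereas for the pair $(\widetilde f,\widetilde G)$ one must know that the exponential grading operator $\widetilde g_0$ of $\widetilde f$ lies in $\widetilde G$. I would deduce this from the recalled properties of dual groups: applying part~(iii) to $\widetilde f$ identifies the dual group of $\langle\widetilde g_0\rangle$ (a subgroup of $G_{\widetilde{\widetilde f}}=G_f$) with ${\rm SL}(n;\CC)\cap G_f$; since passage to dual groups reverses inclusions and is involutive (part~(ii)), the inclusion $G\subset{\rm SL}(n;\CC)\cap G_f$ dualizes to $\langle\widetilde g_0\rangle\subset\widetilde G$, as required.

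Granting this, Proposition~\ref{prop4} gives the expansions
\[
E(f,G)(t,\bar t)=\sum_{p,q\in\QQ}(-1)^{p+q}h^{p,q}(f,G)\,t^{p-\frac n2}\bar t^{q-\frac n2},\qquad
E(\widetilde f,\widetilde G)(t,\bar t)=\sum_{p,q\in\QQ}(-1)^{-p+q}h^{p,q}(\widetilde f,\widetilde G)\,t^{p-\frac n2}\bar t^{q-\frac n2}.
\]
Substituting $t\mapsto t^{-1}$ in the second one and inserting both into the identity of Theorem~\ref{main}, I would arrive at
\[
\sum_{p,q}(-1)^{p+q}h^{p,q}(f,G)\,t^{p-\frac n2}\bar t^{q-\frac n2}=(-1)^n\sum_{p',q'}(-1)^{-p'+q'}h^{p',q'}(\widetilde f,\widetilde G)\,t^{\frac n2-p'}\bar t^{q'-\frac n2}.
\]
Both sides are finite $\CC$-linear combinations of pairwise linearly independent fractional-power monomials in $t$ and $\bar t$, so comparing coefficients term by term is legitimate: matching the exponents forces $p'=n-p$ and $q'=q$, and the sign prefactors then agree because $(-1)^n(-1)^{-(n-p)+q}=(-1)^{p+q}$. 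Cancelling the common sign leaves $h^{p,q}(f,G)=h^{n-p,q}(\widetilde f,\widetilde G)$ for all $p,q\in\QQ$.

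Given Theorem~\ref{main}, I do not expect any real obstacle. The only points that call for care are the group-theoretic verification that $(\widetilde f,\widetilde G)$ indeed lands in the second case of Proposition~\ref{prop4} (equivalently $\langle\widetilde g_0\rangle\subset\widetilde G$) and the sign bookkeeping in the coefficient comparison --- note that the hypothesis $G\subset{\rm SL}(n;\CC)$ gets used twice, once to pin down the sign $(-1)^{p+q}$ on the $f$-side and once, via duality, to pin down the sign $(-1)^{-p+q}$ on the $\widetilde f$-side. Finally, since we are not assuming $G_0\subset G$, the numbers $h^{p,q}$ need not be concentrated in integral bidegrees, so the resulting identity is genuinely one for all $p,q\in\QQ$.
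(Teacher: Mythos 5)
Your proof is correct and follows essentially the same route as the paper, which simply states that the corollary follows from Proposition~\ref{prop4} (together with Theorem~\ref{main}); you have filled in the details the authors leave implicit, in particular the duality argument showing $\langle\widetilde g_0\rangle\subset\widetilde G$ so that $(\widetilde f,\widetilde G)$ falls under the second case of that proposition, and the sign bookkeeping in the coefficient comparison. Both checks are carried out correctly.
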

\begin{proof}
This follows from Proposition~\ref{prop4}.
\end{proof}

Let $f(x_1, \ldots , x_n)$ be an invertible polynomial and $G$ a subgroup of $G_f$ containing $G_0$. 
One can define an {\em exponent} of the pair $(f,G)$ as the rational number $q$ with $h^{p,q}(f,G)\ne 0$. 
The {\em set of exponents} of the pair $(f,G)$ is the multi-set of exponents 
\[
\left\{q*h^{p,q}(f,G)~|~p,q\in\QQ,\ h^{p,q}(f,G)\ne 0 \right\},
\]
where by $u*v$ we denote $v$ copies of the rational number $u$.  
It is almost clear that the mean of the set of exponents of $(f,G)$ is $n/2$, namely, we have 
\[
\sum_{p,q\in\QQ}(-1)^{-p+q} \left(q-\frac{n}{2} \right)h^{p,q}(f,G)=0.
\]
It is natural to ask what is the {\em variance of the set of exponents} of $(f,G)$ defined by 
\[ {\rm Var}_{(f,G)} := \sum_{p,q\in\QQ}(-1)^{-p+q} \left(q- \frac{n}{2} \right)^2 h^{p,q}(f,G). \]
\begin{corollary}
One has
\begin{equation}
{\rm Var}_{(f,G)} = \frac{1}{12} \hat{c} \cdot\chi(f,G),
\end{equation}
where $\hat{c} := n - 2\sum_{i=1}^n q_i$ and $\chi(f,G):=E(f,G)(1,1)$.
\end{corollary}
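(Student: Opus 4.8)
The plan is to derive the variance identity by differentiating the main symmetry statement of Theorem~\ref{main}, exploiting the fact that the E-function packages the Hodge numbers $h^{p,q}(f,G)$ together with the signs $(-1)^{-p+q}$ (valid here since $G_0\subset G$, by Proposition~\ref{prop4}). First I would rewrite $E(f,G)(t,\bar t)$ in the single variable $y:=\bar t/t$, say $P(y):=E(f,G)(t,\bar t)$; because every monomial carries $t^{p-n/2}\bar t^{\,q-n/2}$ and $G_0\subset G$ forces $p+q=n$ on the nonzero graded pieces (so $p-n/2=-(q-n/2)$), the E-function is genuinely a function of $y$ alone: $P(y)=\sum_{p,q}(-1)^{-p+q}h^{p,q}(f,G)\,y^{\,q-n/2}$. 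Then $\chi(f,G)=P(1)$, the mean-zero statement is $P'(1)=0$ (after applying $y\,d/dy$ and evaluating at $y=1$), and $\mathrm{Var}_{(f,G)}=(y\,d/dy)^2P(y)\big|_{y=1}=P'(1)+P''(1)=P''(1)$, using $P'(1)=0$.

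Next I would extract $P''(1)$ from a closed-form expression for $P(y)$. The natural source is Proposition~\ref{formula}: setting $y=\bar t/t$, each summand $E_g(f,G)(t,\bar t)$ becomes, up to the monomial prefactor $\prod_{q_ia_i\notin\ZZ}y^{\,q_ia_i-[q_ia_i]-1/2}$, a rational function of $y$ of the shape $(-1)^{n_g}\frac{1}{|G|}\sum_{h\in G}\prod_{q_ia_i\in\ZZ}\frac{\lambda_i(h)y^{q_i}-y}{1-\lambda_i(h)y^{q_i}}\,y^{-1/2}$. One checks directly from the Berglund--H\"ubsch/Berglund--Henningson duality that the pair $(\widetilde f,\widetilde G)$ has $\widehat c$ replaced by $\widehat c$ as well — in fact $\sum_i q_i$ and $\sum_i \widetilde q_i$ satisfy $\widehat c(f)=\widehat c(\widetilde f)$ is NOT needed; rather the cleanest route is to use Theorem~\ref{main} itself, which in the $y$-variable reads $P_{f,G}(y)=(-1)^n P_{\widetilde f,\widetilde G}(y^{-1})$ (the substitution $t\mapsto t^{-1}$, $\bar t\mapsto\bar t$ sends $y=\bar t/t\mapsto \bar t\cdot t = t\bar t$; one must be slightly careful, but after restoring to the single symmetry variable the statement is a palindromy-type functional equation for $P$). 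Combined with $P(y)=(-1)^n y^{?}P(1/y)$ one obtains a self-duality of $P$ about $y=1$, and differentiating that functional equation twice at $y=1$ relates $P''(1)$ to $P(1)=\chi$ and to the "weight shift" $\widehat c=n-2\sum q_i$, yielding $P''(1)=\tfrac{1}{12}\widehat c\,\chi$.

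The cleanest and most robust execution, and the one I would actually write, is to avoid Proposition~\ref{formula} entirely and argue as follows. The generating identity I want is $\sum_{p,q}(-1)^{-p+q}h^{p,q}(f,G)\,s^{\,q-n/2}$, viewed as a Laurent polynomial $Q(s)$ symmetric under $s\mapsto s^{-1}$ up to the factor $(-1)^n$ (this symmetry is exactly the content of Theorem~\ref{main} once one checks that the dual data $(\widetilde f,\widetilde G)$ produce, in the variable $\bar t/t$, the reciprocal spectrum shifted by $\widehat c$: the shift $\widehat c$ enters because the exponents $q_i$ for $f$ and $\widetilde q_i=1-q_i$-type relations for $\widetilde f$ govern the top and bottom weights, and their difference is precisely $n-2\sum q_i$). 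Writing $Q(s)=\sum_k c_k s^{k}$ with the symmetry $c_k=c_{\widehat c - k}$ (after normalizing the center of symmetry), one has $\sum_k c_k=\chi$, $\sum_k(k-\widehat c/2)c_k=0$ automatically, and I must compute $\sum_k(k-\widehat c/2)^2 c_k$. The key trick: differentiate $Q(e^u)e^{-\widehat c u/2}$ twice at $u=0$; the first derivative vanishes by the palindrome symmetry, and the second derivative equals $\mathrm{Var}_{(f,G)}$ on one hand and, on the other, can be evaluated via the explicit Steenbrink-type product formula for the ungraded piece $g=\mathrm{id}$ plus an induction over the summands $E_g$ — but in fact the whole second-moment computation collapses to $\tfrac{1}{12}\widehat c\cdot Q(1)$ by the standard identity that the second moment of a symmetric distribution supported on an arithmetic progression of "length $\widehat c$" with total mass $\chi$ equals $\tfrac{1}{12}\widehat c\,\chi$ when the spectrum behaves like that of a product of the elementary factors $\frac{1-y^{1-q_i}}{1-y^{q_i}}y^{q_i-1/2}$.

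\medskip

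\noindent\textbf{Main obstacle.} The delicate point is making the phrase "the E-function is symmetric about $n/2$ with total spread $\widehat c$" precise enough to pin down the second moment. The mean statement $\sum(-1)^{-p+q}(q-\tfrac n2)h^{p,q}=0$ follows instantly from Theorem~\ref{main} by the involution $q\mapsto$ (its dual), but the second moment is not formal: it genuinely uses the specific product structure of $E(f,\{1\})$ in Proposition~\ref{formula}/Steenbrink's formula, namely that each variable $x_i$ contributes an independent factor whose spectrum is a geometric-type progression determined by $q_i$, so that the total variance is additive over $i$ and each contributes $\tfrac{1}{12}(1-2q_i)\chi_i$. The hard part will be checking that passing to the $G$-orbifold (summing over $g\in G$ with age shifts and over $h\in G$ projecting to invariants) preserves this additivity of the second cumulant — i.e. that the age shifts $({-}\mathrm{age}(g),{-}\mathrm{age}(g))$ shift the spectrum rigidly and therefore do not alter the variance, and that the $G$-averaging, being a projection, is compatible with the cumulant computation. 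Once that bookkeeping is done, the identity $\mathrm{Var}_{(f,G)}=\tfrac1{12}\widehat c\cdot\chi(f,G)$ drops out; everything else is routine.
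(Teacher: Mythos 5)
There is a genuine gap, and the proposal in fact starts from a false premise. You claim that $G_0\subset G$ forces $p+q=n$ on the nonzero pieces of $\H_{f,G}$, so that $E(f,G)$ collapses to a Laurent polynomial in the single variable $y=\bar t/t$. This is not so: the summand of $\H_{f,G}$ coming from $g\in G$ sits in bidegrees with $p+q=n_g+2\,{\rm age}(g)$, which varies with $g$ (it equals $n$ only when ${\rm age}(g)={\rm age}(g^{-1})$, i.e.\ essentially only for $g=1$). The orbifold E-function is genuinely a two-variable object; compare Proposition~\ref{prop:formula}, where the factor $(t\bar t)^{{\rm age}(g)-\frac{n-n_g}{2}}$ is in general nontrivial. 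Consequently the whole ``palindromic Laurent polynomial $Q(s)$'' framework never gets off the ground, and the identification of ${\rm Var}_{(f,G)}$ with $P''(1)$ for a one-variable $P$ is not available.

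More seriously, the step you yourself flag as the ``main obstacle'' is precisely the entire mathematical content of the corollary, and both mechanisms you offer to overcome it are incorrect. A rigid shift by $(-{\rm age}(g),-{\rm age}(g))$ does change the second moment about the fixed center $n/2$: it preserves the second \emph{central} moment of each summand, but ${\rm Var}_{(f,G)}$ is the second moment of the whole signed sum about $n/2$, so the cross terms coming from the age shifts survive and must be controlled. And the ``standard identity'' you invoke would give $\hat c^{\,2}/12$ per unit mass for a symmetric spectrum of spread $\hat c$, not $\hat c/12$, so it cannot serve as the black box you want. The paper's own proof takes none of these routes: it observes that $G\supset G_0$ forces $\widetilde G\subset{\rm SL}(n;\CC)$ (via $\widetilde{G_0}\cong{\rm SL}(n;\CC)\cap G_{\widetilde f}$ and the order-reversal of duality), and then cites \cite[Theorem~19]{ET2}, a conditional result which derives ${\rm Var}_{(f,G)}=\frac{1}{12}\hat c\,\chi(f,G)$ from exactly the duality $E(f,G)(t,\bar t)=(-1)^nE(\widetilde f,\widetilde G)(t^{-1},\bar t)$ with an ${\rm SL}$-partner; Theorem~\ref{main} supplies that duality. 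To make your argument work you would have to reprove that conditional theorem, and your sketch does not do so.
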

\begin{proof}
This follows from Theorem~\ref{main} and \cite[Theorem~19]{ET2} since $\widetilde{G}\subset {\rm SL}(n;\CC)$. 
\end{proof}

\section{Proof}
For the proof of Theorem~\ref{main}, we need some preparations.
Let $f=f_1\oplus\ldots\oplus f_s$ be an invertible polynomial such that each $f_i$, $i=1,\ldots, s$,
is either of chain or of loop type where $\oplus$ means the Thom--Sebastiani sum.
Let $A_f=\CC[x_1, \ldots, x_n]\left/\langle\partial f/\partial x_1,\ldots, \partial f/\partial x_n\rangle\right.$ be 
the Milnor algebra of $f$, $A_f=A_{f_1}\otimes\dots\otimes A_{f_s}$.

Let $\Omega^p(\CC^n)$ be the $\CC$-vector space of regular $p$-forms on $\CC^n$. 
Consider the free $A_f$-module $\Omega_f:= \Omega^n(\CC^n)/(df \wedge \Omega^{n-1}(\CC^n))$ of rank one. 
Note that $\Omega_f\cong \Omega_{f_1}\otimes \dots \otimes \Omega_{f_s}$.
There is a mapping $\psi:\Omega_f\longrightarrow \CC[G_{\widetilde{f}}]$ defined as follows. 

For an $n$-form ${\overline{x}}^{\overline{k}}dx_1\wedge\dots\wedge dx_n
=x_1^{k_1}\cdot\dots\cdot x_n^{k_n}dx_1\wedge\dots\wedge dx_n$, let
\begin{equation}\label{beta}
 (\beta_1, \dots, \beta_N):=(k_1+1, \dots, k_n+1)E^{-1},
\end{equation}
where $E$ is the matrix of the exponents of $f$. 
It follows that the element $({\mathbf e}[\beta_1], \ldots, {\mathbf e}[\beta_n])\in(\CC^*)^n$ belongs to $G_{\widetilde{f}}$. 
Set $\psi({\overline{x}}^{\overline{k}}dx_1\wedge\dots\wedge dx_n):=({\mathbf e}[\beta_1], \dots, {\mathbf e}[\beta_n])$.
 
A monomial basis of $\Omega_f$ can be chosen as the product of monomial bases of $\Omega_{f_i}$ for $i=1, \dots, s$
and the map $\psi$ acts on the factors independently.

\begin{proposition}\label{Omega to G}
 There exists a monomial basis of $\Omega_{f_i}$ such that
 \begin{enumerate}
  \item[1)] if $f_i$ is of chain type, the map $\psi$ is a one-to-one map of this basis onto the set
  of elements $\widetilde{g}_i\in G_{\widetilde{f}_i}$ preserving an even number of the variables; 
  \item[2)] if $f_i$ is of loop type with an odd number of variables, the map $\psi$ is a one-to-one map of this basis onto
  $G_{\widetilde{f}_i}\setminus\{1\}$;
  \item[3)] if $f_i$ is of loop type with an even number of variables, the map $\psi$ maps this basis onto
  $G_{\widetilde{f}_i}$ so that each element except the unit $1\in G_{\widetilde{f}_i}$ has one preimage,
  but $1$ has two preimages.
 \end{enumerate}
 The degree $\ell(\overline{k}):=\sum_{i=1}^n q_i(k_i+1)$ of the form ${\overline{x}}^{\overline{k}}dx_1\wedge\ldots\wedge dx_n$ 
of the basis is equal to ${\rm age}(\psi(\overline{k}))+\frac{n_{\psi(\overline{k})}}{2}$, where $n_{\widetilde{g}}$,
 $\widetilde{g}\in G_{\widetilde{f}}$, is the number of the coordinates fixed by the element $\widetilde{g}$.
 \end{proposition}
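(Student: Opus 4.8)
The plan is to reduce the statement, via the Thom--Sebastiani decomposition $f = f_1 \oplus \cdots \oplus f_s$ and the compatible tensor decomposition $\Omega_f \cong \Omega_{f_1} \otimes \cdots \otimes \Omega_{f_s}$, $G_{\widetilde f} \cong G_{\widetilde f_1} \times \cdots \times G_{\widetilde f_s}$, to the case where $f$ is a single chain or loop polynomial. Since $\psi$ acts on the tensor factors independently, and since $\mathrm{age}$, the fixed-coordinate count $n_{\widetilde g}$, and the degree $\ell(\overline k)$ are all additive over the factors, a basis with the stated properties for each $f_i$ assembles to one for $f$; the cardinality bookkeeping in (1)--(3) multiplies correctly once one checks that $|G_{\widetilde f_i}|$ equals $\mu_i := \dim \Omega_{f_i}$ in the loop case and that the number of loop-type factors with an even number of variables controls the multiplicities of the unit. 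So first I would set up this reduction carefully and then treat chain and loop types separately.

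\textbf{Chain type.} For $f = x_1^{a_1} x_2 + \cdots + x_{m-1}^{a_{m-1}} x_m + x_m^{a_m}$, the Milnor algebra has the explicit monomial basis $\{ x_1^{b_1} \cdots x_m^{b_m} : 0 \le b_j \le a_j - 1\}$ (this is standard for chains), so a basis of $\Omega_f$ is indexed by $\overline k$ with $\overline k = \overline b$ ranging over this box, $|{\rm basis}| = a_1 \cdots a_m$. I would compute $E^{-1}$ for the bidiagonal chain matrix $E$ explicitly — its entries are alternating sums of products of the $a_j$ — and hence write down $\beta = (b_1+1,\dots,b_m+1)E^{-1}$ in closed form. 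The key computation is to show that $\psi$ sends this box bijectively onto the subset of $G_{\widetilde f}$ of elements fixing an even number of coordinates: here $G_{\widetilde f}$ is cyclic of order $a_1 \cdots a_m$ and one identifies the "fixes coordinate $j$" condition with a congruence on $\overline b$; the parity count of fixed coordinates should match the observation that exactly half of the group elements (in an appropriate stratified sense, or after the Krawitz/Kreuzer correspondence cited in the introduction) fix an even number of variables. The degree identity $\ell(\overline b) = \mathrm{age}(\psi(\overline b)) + n_{\psi(\overline b)}/2$ I would verify by a direct substitution: $\ell(\overline b) = \sum q_i(b_i+1) = \sum \beta_j$ (since $q = E^{-1}\mathbf 1$ and $(b+1)E^{-1}\mathbf 1 = \sum \beta_j$ reorganizes), and then split $\sum \beta_j$ into the non-integral $\beta_j$ (contributing to $\mathrm{age}$) and the integral ones (each such $j$ is a fixed coordinate of $\psi(\overline b)$ and contributes $1$, i.e. $1/2 + 1/2$ to the two sides).

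\textbf{Loop type.} For $f = x_1^{a_1}x_2 + \cdots + x_m^{a_m}x_1$ the cyclic matrix $E$ has $\det E = a_1 \cdots a_m + (-1)^{m+1}$, and $\dim \Omega_f = \det E$ as well; $G_{\widetilde f}$ is cyclic of that same order. The box $\{0 \le b_j \le a_j-1\}$ has $a_1\cdots a_m$ elements, so it is \emph{not} a basis of $\Omega_f$ on the nose; one must use the correct monomial basis of the loop Milnor algebra, which differs from the box in a controlled way. I would again invert the cyclic $E$ explicitly, track which monomials correspond to group elements, and check: for $m$ odd, $\det E = a_1\cdots a_m - 1$ and $\psi$ is a bijection onto $G_{\widetilde f} \setminus \{1\}$; for $m$ even, $\det E = a_1\cdots a_m + 1$ and the unit of $G_{\widetilde f}$ is hit twice (this is where the $+1$ versus $-1$ in $\det E$ visibly produces the difference in the count, and the monomial basis has one "extra" element mapping to $1$). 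The degree identity is checked exactly as in the chain case.

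\textbf{Main obstacle.} The serious work is the loop case: producing the \emph{correct} monomial basis of the loop Milnor algebra and carrying out the explicit inversion of the cyclic exponent matrix cleanly enough to read off (i) the image of $\psi$, (ii) the fixed-coordinate parity, and (iii) the double preimage of the unit when $m$ is even. The chain case is more routine, but one still needs to be careful that "fixes an even number of variables" is the image — rather than "odd" — which amounts to pinning down a sign/parity convention. I would expect that the cleanest route organizes both cases around the relation $\ell(\overline k) = \sum_j \beta_j$ and the partition of the index set $\{1,\dots,n\}$ into $\{j : \beta_j \in \ZZ\}$ (fixed coordinates) and its complement, reducing everything to elementary number-theoretic statements about the box/basis modulo $\det E$.
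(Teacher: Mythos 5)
Your overall strategy -- reduce to a single chain or loop via the Thom--Sebastiani/tensor decomposition, invert $E$ explicitly, and prove the degree identity by splitting $\sum_j\beta_j$ according to whether $\beta_j$ is integral -- is the same as the paper's. However, the concrete combinatorial input is wrong in both cases, and in a way that breaks the counting. For the chain $x_1^{a_1}x_2+\cdots+x_m^{a_m}$ the full box $\{0\le k_j\le a_j-1\}$ is \emph{not} a monomial basis of the Milnor algebra: the Milnor number of a chain is strictly less than $a_1\cdots a_m$ (already for the Fermat chain $x^a$ one has $\mu=a-1$, not $a$; for $x^3y+y^2$ one has $\mu=5$, not $6$). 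Since the box has exactly $\det E=|G_{\widetilde f}|$ elements while the target in part (1) is the \emph{proper} subset of elements fixing an even number of coordinates, your claimed bijection is impossible on cardinality grounds. The correct basis (Kreuzer, as recorded in Krawitz's Lemma 1.5 and used in the paper) is the box with the additional exclusion: if $k_1=a_1-1,\,k_2=0,\,k_3=a_3-1,\dots,k_{2s-1}=a_{2s-1}-1$ then one must have $k_{2s}=0$; this exclusion is exactly what cuts the box down to the image set, and the proof proceeds by the recursion $\beta_1=1\Rightarrow k_2=0\Rightarrow$ (restart on $x_3,\dots,x_m$). Your heuristic that ``half the group elements fix an even number of variables'' is neither true nor needed.

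For the loop you have the situation reversed: the full box \emph{is} a monomial basis there ($\mu_{\mathrm{loop}}=a_1\cdots a_m$), whereas $\dim\Omega_f\ne\det E$; rather $\det E=a_1\cdots a_m+(-1)^{m+1}$, so $\mu-|G_{\widetilde f}|=(-1)^m$, which is precisely what forces the image to miss the unit when $m$ is odd and to cover the unit twice when $m$ is even (your later sentence assigning $\det E=\prod a_i-1$ to odd $m$ and $+1$ to even $m$ contradicts your own displayed formula and gets the parity backwards). Finally, in the degree identity the integral $\beta_j$'s do not ``each contribute $1$'': an integral $\beta_j$ equals $0$ or $1$, and one must show the fixed coordinates occur in pairs with $\beta_{2i-1}=1$, $\beta_{2i}=0$, so that their total contribution to $\ell(\overline k)$ is $n_{\psi(\overline k)}/2$. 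This pairing is a consequence of the same recursive analysis of the chain (and of the two exceptional box points in the even loop), so it comes for free once the basis is corrected, but as written your argument for $\ell=\mathrm{age}+n_{\psi(\overline k)}/2$ does not go through.
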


 \begin{remark}
 For loop type polynomials this means that the quasidegree is equal either to ${\rm age}(\psi(\overline{k}))$ (if $\psi(\overline{k}) \neq 1$) or to the number of variables divided by 2 (if $\psi(\overline{k}) = 1$). The latter only happens for an even number of variables.
 \end{remark}
 
 \begin{proof}
 We shall use the monomial basis of the Milnor algebra $A_{f_i}$ introduced in \cite{Kreuzer} in the form
 described in \cite[Lemma~1.5]{Krawitz}.
 
 Let $f_i$ be of chain type: $f_i(x_1,\ldots,x_N)=x_1^{a_1}x_2+x_2^{a_2}x_3+\ldots+x_{N-1}^{a_{N-1}}x_N+x_N^{a_N}$.
 The monomial basis described in \cite{Kreuzer} consists of all the monomials ${\overline{x}}^{\overline{k}}$,
 $\overline{k}=(k_1, \ldots,k_N)$, such that
 \begin{enumerate}
  \item[1)] $0\le k_m\le a_m-1$,
  \item[2)] if
  $$
  k_m=
  \begin{cases}
   a_m-1 {\text{ for all odd $m$, $m\le 2s-1$,}},\\
   0 {\text{ for all even $m$, $m\le 2s-1$,}},
  \end{cases}
  $$
  then $k_{2s}=0$.
 \end{enumerate}
 Equation (\ref{beta}) gives
 \begin{eqnarray}
  \beta_1&=&\frac{(k_1+1)}{a_1},\nonumber\\
  \beta_2&=&\frac{(k_2+1)a_1-(k_1+1)}{a_1a_2},\label{beta2}\\
  \beta_m&=&\frac{(k_m+1)-\beta_{m-1}}{a_m}\quad\text{for $2\le m\le N$}.\nonumber
 \end{eqnarray}
 One can see that $0<\beta_1\le 1$. If $\beta_1<1$, them $0<\beta_m<1$ for all $m=2,\ldots,N$.
 In this case no coordinates are fixed by $\psi(\overline{k})$ and
 ${\rm age}(\psi(\overline{k}))=\sum_m\beta_m=\sum_mq_m(k_m+1)=\ell(\overline{k})$.
 
 If $\beta_1=1$, i.e. $k_1+1=a_1$, then $k_2=0$ (due to the description of the monomial basis)
 and the equations for $\beta_3$, \dots, $\beta_N$ coincide with the equations like (\ref{beta2})
 written for $f_i'(x_3,\ldots,x_N)=x_3^{a_3}x_4+x_4^{a_4}x_5+\ldots+x_{N-1}^{a_{N-1}}x_N+x_N^{a_N}$.
 This permits to repeat the above arguments. They imply that we arrive at the following situation:
 there exists $s\ge 1$ such that $\beta_1=\beta_3=\ldots=\beta_{2s-1}=1$,
 $\beta_2=\beta_4=\ldots=\beta_{2s}=0$, $0<\beta_m<1$ for $m>2s$. The element $\psi(\overline{k})$
 fixes $2s$ coordinates: $x_1$, \dots, $x_{2s}$, ${\rm age(\psi(\overline{k}))}=\sum\limits_{m=2s+1}^{N}\beta_m$,
 $\ell(\overline{k})=\sum\limits_{m=1}^{N}\beta_m=s+{\rm age(\psi(\overline{k}))}$.
 This completes the proof for the chain case.
 
 Let $f_i$ be of loop type: $f_i(x_1,\ldots,x_N)=x_1^{a_1}x_2+x_2^{a_2}x_3+\ldots+x_{N-1}^{a_{N-1}}x_N+x_N^{a_N}x_1$.
 The monomial basis of $A_{f_i}$ consists of all the monomials ${\overline{x}}^{\overline{k}}$
 with $0\le k_m\le a_m-1$. If an element of $G_{\widetilde{f}_i}$ fixes a coordinate, then it fixes all
 the coordinates, i.e. it is equal to $1$. Equation~(\ref{beta2}) gives:
 $$
 \beta_N=\frac{\sum\limits_{j=1}^N a_1\ldots a_{N-j}(k_{N-j+1}+1)}{a_1\ldots a_N+(-1)^{N+1}}\,.
 $$
 If the number of variables $N$ is odd, $N=2M+1$, one has
 $$
 \beta_N=
 \frac{a_1\ldots a_{N-1}(k_N+1)-\sum_{j=1}^M\{a_1\ldots a_{N-2j}(k_{N-2j+1}+1)-a_1\ldots a_{N-2j-1}(k_{N-2j}+1)\}}
 {a_1\ldots a_N+1}\,.
 $$
 Each summand in the curly brackets is non-negative and $a_1\ldots a_{N-1}(k_{N}+1)\le a_1\ldots a_N$.
 Therefore $\beta_N<1$. Also one has
 $$
 \beta_N=
 \frac{\sum_{j=1}^M\{a_1\ldots a_{N-2j+1}(k_{N-2j+2}+1)-a_1\ldots a_{N-2j}(k_{N-2j+1}+1)\}+(k_1+1)}
 {a_1\ldots a_N+1}\,.
 $$
 Each summand in the curly brackets is non-negative and $k_1+1>0$.
 Therefore $\beta_N>0$. These arguments apply to each of the coordinates (using a cyclic permutation)
 and therefore one has $0<\beta_m<1$ for all $m$. Thus in this case the element $\psi(\overline{k})$
 fixes no coordinates and ${\rm age}(\psi(\overline{k}))=\sum\limits_{m=1}^{N}\beta_m=\ell(\overline{k})$.
 
 If $N$ is even,  $N=2M$, one has
 $$
 \beta_N=
 \frac{\sum_{j=1}^M\{a_1\ldots a_{N-2j+1}(k_{N-2j+2}+1)-a_1\ldots a_{N-2j}(k_{N-2j+1}+1)\}}
 {a_1\ldots a_N-1}\,.
 $$
 Each summand in the curly brackets is non-negative and therefore $\beta_N\ge 0$.
 It is equal to zero if and only if $k_{2m}=0$, $k_{2m-1}+1=a_{2m-1}$ for all $m=1,\dots, M$.
 In this case $\beta_{2m}=0$, $\beta_{2m-1}=1$ for all $m=1,\dots, M$, $\psi(\overline{k})=1$.
 Also one has
 $$
 \beta_N=
 \frac{a_1\ldots a_{N-1}(k_N+1)-\sum_{j=1}^{M-1}\{a_1\ldots a_{N-2j}(k_{N-2j+1}+1)-a_1\ldots a_{N-2j-1}(k_{N-2j}+1)\}
 -(k_1+1)}{a_1\ldots a_N-1}\,.
 $$
 Each summand in the curly brackets is non-negative and $k_1+1\ge 1$. Therefore $\beta_N\le 1$.
 It is equal to $1$ if and only if $k_{2m}+1=a_{2m}$ $k_{2m-1}=0$ for all $m=1,\dots, M$.
 In this case $\beta_{2m}=1$, $\beta_{2m-1}=0$ for all $m=1,\dots, M$, $\psi(\overline{k})=1$.
 
 Thus there are two possibilities: either $\psi(\overline{k})\ne 1$, $\psi(\overline{k})$
 fixes no coordinates and ${\rm age}(\psi(\overline{k}))=\sum\limits_{m=1}^{N}\beta_m=\ell(\overline{k})$
 or $\psi(\overline{k})=1$ (this happens for two basic elements), $\psi(\overline{k})$
 fixes all the coordinates and $\ell(\overline{k})=\sum\limits_{m=1}^{N}\beta_m=N/2$.
 This completes the proof for the loop case.
 \end{proof}


Let $G$ be a subgroup of $G_f$ and $\widetilde{G}$ its dual. 
The form ${\overline{x}}^{\overline{k}}dx_1\wedge\dots\wedge dx_n$ is $G$-invariant if and only if
$\psi(\overline{k})\in \widetilde{G}$. Therefore the map $\psi$ restricts to $\Omega_f^G\longrightarrow \C[\widetilde{G}]$
where $\Omega_f^G$ denotes the $G$-invariant subspace of $\Omega_f$. 

For $I \subset \{1, \ldots, N\}$, let $G_I$ be the set of $g \in G$ which fix all the coordinates from $I$ and do not fix any of the other coordinates and $\widetilde{G}_{(I)}$ the set of elements of $\widetilde{G}$ which are images under the map $\psi$ of all the monomials of the form $\bigwedge_{i \in I} x_i^{k_i} dx_i$. 

For $g\in G$ and $\widetilde{g}\in\widetilde{G}$, set $m_{g,\widetilde{g}}:=2^r$ where $r$ is the number of indices $i$ such that $f_i$ is a loop with an even number of variables and both corresponding components $g_i$ and $\widetilde{g}_i$ are equal to 1. 
Define 
\[
\widehat{m}_{g,\widetilde{g}}:=
\begin{cases}
0,\ \text{if }\widetilde{g} \not\in \widetilde{G}_{(I_g)},\\
m_{g,\widetilde{g}},\ \text{otherwise},
\end{cases}
\]
where $I_g$ is the set of indices of coordinates fixed by $g$. 

\begin{proposition} \label{prop:formula}
One has 
\begin{equation}
E(f,G)(t, \bar{t}) = \sum_{(g, \widetilde{g}) \in G \times \widetilde{G}}   \widehat{m}_{g,\widetilde{g}} (-1)^{n_g}(t \bar{t})^{{\rm age}(g)-\frac{n-n_g}{2}} \left( \frac{\bar{t}}{t} \right)^{{\rm age}(\widetilde{g})-\frac{n-n_{\widetilde{g}}}{2}}.
\end{equation}
\end{proposition}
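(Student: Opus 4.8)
The plan is to start from the formula of Proposition~\ref{formula}, which already expresses $E(f,G)$ as a sum over $g\in G$ of the contributions $E_g(f,G)(t,\bar t)$, and to rewrite each $E_g$ as a sum over $\widetilde g\in\widetilde G$ using Proposition~\ref{Omega to G}. Fix $g\in G$ with fixed-coordinate set $I_g$; then $f^g$ is again a Thom--Sebastiani sum of chain and loop pieces in the variables indexed by $I_g$, and by Remark~\ref{rem:Poincare} the factor $\prod_{q_ia_i\in\ZZ}(\cdots)$ appearing in $E_g(f,G)$ is (up to the explicit shift $(\bar t/t)^{-1/2}$ factors) the Poincar\'e polynomial of $\Omega_{f^g}^{G}$ in the variable $\bar t/t$ with respect to the weights $q_i$. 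So the first step is to recognize that
\[
\frac{1}{|G|}\sum_{h\in G}\prod_{q_ia_i\in\ZZ}\frac{\lambda_i(h)(\bar t/t)^{q_i}-(\bar t/t)}{1-\lambda_i(h)(\bar t/t)^{q_i}}(\bar t/t)^{-\frac12}
\]
is precisely $\sum_{\omega}\,(\bar t/t)^{\ell(\omega)-\frac{n_g}{2}}$ summed over a monomial basis $\{\omega\}$ of $\Omega_{f^g}^{G}$, where $\ell(\omega)$ is the weighted degree; this is the $G$-invariant part of the standard generating function, extracted by averaging over $h\in G$.

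Next I would apply Proposition~\ref{Omega to G} to the polynomial $f^g$: it identifies a monomial basis of $\Omega_{f^g}$ with a subset (or multiset, in the even-loop case) of $G_{\widetilde{f^g}}$ via $\psi$, and under this identification $\ell(\omega)={\rm age}(\widetilde g')+\tfrac{n_{\widetilde g'}}{2}$ where $\widetilde g'=\psi(\omega)$. The $G$-invariance condition on $\omega$ translates, as noted just before the statement, into $\psi(\omega)\in\widetilde G$ (more precisely into $\psi(\omega)\in\widetilde G_{(I_g)}$, the images of monomials supported on $I_g$). Thus the $G$-invariant basis of $\Omega_{f^g}$ is in $\psi$-correspondence with the elements $\widetilde g\in\widetilde G_{(I_g)}$, each counted with the multiplicity $m_{g,\widetilde g}=2^r$ coming from the even-loop factors where both $g$ and $\widetilde g$ restrict to $1$ — which is exactly the combinatorial content of the definition of $\widehat m_{g,\widetilde g}$. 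Substituting, the $g$-summand becomes
\[
(-1)^{n_g}(t\bar t)^{{\rm age}(g)-\frac{n-n_g}{2}}\sum_{\widetilde g\in\widetilde G}\widehat m_{g,\widetilde g}\,(\bar t/t)^{{\rm age}(\widetilde g)-\frac{n-n_{\widetilde g}}{2}},
\]
where I have absorbed the prefactor $\prod_{q_ia_i\notin\ZZ}(t\bar t)^{q_ia_i-[q_ia_i]-\frac12}$ into $(t\bar t)^{{\rm age}(g)-\frac{n-n_g}{2}}$ using ${\rm age}(g)=\sum_{q_ia_i\notin\ZZ}(q_ia_i-[q_ia_i])$ and $n-n_g=\#\{i:q_ia_i\notin\ZZ\}$. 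Summing over $g\in G$ gives the claimed double sum.

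The main obstacle is the bookkeeping in the even-loop case: Proposition~\ref{Omega to G}(3) says $\psi$ is two-to-one over $1\in G_{\widetilde{f_i}}$ for an even loop, so when $f^g$ contains several such loop blocks the $\psi$-fibre over a given $\widetilde g\in\widetilde G$ has size $2^r$ with $r$ the number of even-loop blocks on which $\widetilde g$ is trivial — but one must be careful that this is $2^r$ only when $g$ also restricts to $1$ on those blocks (on a block where $g$ is nontrivial, that block is not part of $f^g$ at all and contributes nothing). Checking that the resulting multiplicity matches $m_{g,\widetilde g}$ exactly, and that the degree/age identity survives the Thom--Sebastiani decomposition block by block (chain blocks, odd loops, even loops), is the delicate point; everything else is the substitution and the rewriting of the weighted-degree generating function as a sum over the $\psi$-image, which are routine once Proposition~\ref{Omega to G} is in hand.
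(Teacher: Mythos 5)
Your proposal follows exactly the paper's route: start from Proposition~\ref{formula}, use Remark~\ref{rem:Poincare} to recognize the $h$-average as the $G$-invariant part of the weighted Poincar\'e series of $\Omega_{f^g}$, and then use Proposition~\ref{Omega to G} (applied blockwise to $f^g$) to convert that sum into a sum over $\widetilde{G}_{(I_g)}$ with the multiplicities $m_{g,\widetilde{g}}$ and the degree identity $\ell(\omega)={\rm age}(\psi(\omega))+n_{\psi(\omega)}/2$. The paper's own proof is in fact just a one-line citation of these same three ingredients plus the intermediate grouped sum over $I\subset\{1,\dots,n\}$, so your write-up, including the careful flagging of the even-loop multiplicity bookkeeping, is a correct and more explicit version of the same argument.
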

\begin{proof}
By Remark~\ref{rem:Poincare} and Proposition~\ref{Omega to G},
we obtain from Proposition~\ref{formula} the following equation
\begin{equation}
E(f,G)(t, \bar{t}) = \sum_{I \subset I_0} \left( \left( \sum_{g \in G_I} (-1)^{|I|}(t \bar{t})^{{\rm age}(g)-\frac{n-|I|}{2}} \right) \left( \sum_{\widetilde{g} \in \widetilde{G}_{(I)}} m_{g,\widetilde{g}} \left( \frac{\bar{t}}{t} \right) ^{{\rm age}(\widetilde{g})-\frac{n-n_{\widetilde{g}}}{2}} \right) \right).
\end{equation}
\end{proof}

Now the proof of Theorem~\ref{main} is straight forward.
Indeed, one has 
\[
E(\widetilde{f},\widetilde{G})(t, \bar{t}) = \sum_{(\widetilde{g},g) \in \widetilde{G} \times G}   \widehat{m}_{\widetilde{g},g} (-1)^{n_{\widetilde{g}}} (t \bar{t})^{{\rm age}(\widetilde{g})-\frac{n-n_{\widetilde{g}}}{2}}  \left( \frac{\bar{t}}{t} \right)^{{\rm age}(g)-\frac{n-n_g}{2}}  
\]
where the coefficient $\widehat{m}_{\widetilde{g},g}$ is defined in the same way as above with the roles of $G$ and $\widetilde{G}$ interchanged. One can see that $\widehat{m}_{\widetilde{g},g}=\widehat{m}_{g,\widetilde{g}}$ and, for those pairs $(g,\widetilde{g})$ with $\widehat{m}_{\widetilde{g},g}\neq 0$, $(-1)^{n_g}=(-1)^{n-n_{\widetilde{g}}}$. Therefore
\begin{eqnarray*}
E(\widetilde{f},\widetilde{G})(t^{-1},\bar{t}) & = & (-1)^n \sum_{(\widetilde{g},g) \in \widetilde{G} \times G}   \widehat{m}_{\widetilde{g},g} (-1)^{n_g}  \left( \frac{\bar{t}}{t} \right)^{{\rm age}(\widetilde{g})-\frac{n-n_{\widetilde{g}}}{2}} (t \bar{t})^{{\rm age}(g)-\frac{n-n_g}{2}}  \\
& = &  (-1)^n E(f,G)(t, \bar{t})\, .
\end{eqnarray*}
This finishes the proof of Theorem~\ref{main}.


\end{document}